\documentclass[11pt,a4paper]{article}

\usepackage[margin=27mm]{geometry}
\usepackage{amsmath,amssymb,mathtools,amsthm}
\usepackage{microtype}
\usepackage[hidelinks]{hyperref}
\numberwithin{equation}{section}

\newtheorem{theorem}{Theorem}[section]
\newtheorem{proposition}[theorem]{Proposition}
\newtheorem{lemma}[theorem]{Lemma}
\newtheorem{corollary}[theorem]{Corollary}
\theoremstyle{definition}
\newtheorem{definition}[theorem]{Definition}
\newtheorem{remark}[theorem]{Remark}

\newcommand{\E}{\mathbb E}
\newcommand{\Prob}{\mathbb P}
\newcommand{\F}{\mathbb F}
\newcommand{\ind}{\mathbf 1}
\newcommand{\cA}{\mathcal A}

\title{Bounded independence for the inverse star discrepancy}
\author{Kosuke Suzuki\thanks{The work of K.S.\ is supported by JSPS KAKENHI
Grant Numbers 24K06857 and 26K00620.}}
\date{}

\begin{document}
\maketitle

\begin{abstract}
We give a random-bit-efficient construction for the inverse star discrepancy.
For every fixed $u\in(0,1)$, $k$-wise independent uniform points
$\boldsymbol{X}_1,\ldots,\boldsymbol{X}_N$ with
$k=O(d(1+\log(1+N/d)))$ satisfy the Monte Carlo bound
$D_N^*(\boldsymbol{X}_1,\ldots,\boldsymbol{X}_N)
=O(\sqrt{d/N})$ with probability at least $u$.  Consequently,
$N=O(d\varepsilon^{-2})$ and
$k=O(d(1+\log\varepsilon^{-1}))$ suffice to attain discrepancy at most
$\varepsilon$.  The proof isolates the finitely many moments required by a
chaining argument and gives explicit constants.  A random vector-valued
polynomial over a finite field realizes the required bounded independence on
a grid using
$O(d^2(1+\log(1+N/d))\log N)$ random bits, rather than the
$\Theta(dN\log(dN))$ bits used by independent grid sampling.
\end{abstract}

\medskip
\noindent\textbf{Keywords.}
Star discrepancy, inverse star discrepancy, $k$-wise independence,
random-bit complexity, finite-field polynomial construction.

\noindent\textbf{2020 Mathematics Subject Classification.}
11K38, 65C05, 65C10.

\section{Introduction}

For an $N$-point multiset
$P=(\boldsymbol{x}_1,\ldots,\boldsymbol{x}_N)$ in $[0,1)^d$, its star
discrepancy is
\begin{equation}
D_N^*(P)
=
\sup_{\boldsymbol{z}\in[0,1]^d}
\left|
\frac1N\sum_{n=1}^N
\ind_{[\boldsymbol{0},\boldsymbol{z})}(\boldsymbol{x}_n)
-
\prod_{j=1}^d z_j
\right|.
\label{eq:star-discrepancy}
\end{equation}
Here
$[\boldsymbol{0},\boldsymbol{z})=\prod_{j=1}^d[0,z_j)$.
All point collections below are equal-weight multisets.
For every function $f$ of bounded Hardy--Krause variation, the
Koksma--Hlawka inequality gives
\[
\left|
\frac1N\sum_{n=1}^N f(\boldsymbol{x}_n)
-\int_{[0,1]^d}f(\boldsymbol{x})\,d\boldsymbol{x}
\right|
\le V_{\mathrm{HK}}(f)D_N^*(P).
\]
For fixed $d$, classical low-discrepancy constructions satisfy
$D_N^*(P)=O_d(N^{-1}(\log N)^{d-1})$.  The Koksma--Hlawka inequality then
gives the deterministic QMC integration error bound of the same order; see
\cite{Nie92,DP10}.  This asymptotic regime, however, hides the dependence on
$d$ in the implied constant.

Set
\[
D^*(N,d):=\inf_{P\in([0,1)^d)^N}D_N^*(P),
\qquad
n^*(d,\varepsilon):=\min\{N\in\mathbb N:D^*(N,d)\le\varepsilon\}.
\]
The inverse problem for star discrepancy asks for the joint dependence of
$n^*(d,\varepsilon)$ on $d$ and $\varepsilon$.  For sufficiently small
$\varepsilon$, the known uniform estimates are
\[
d\varepsilon^{-1}\lesssim n^*(d,\varepsilon)
\lesssim d\varepsilon^{-2};
\]
the upper bound is due to Heinrich--Novak--Wasilkowski--Wo\'zniakowski
\cite{HNWW01}, and the lower bound to Hinrichs~\cite{Hin04}.  Dick~\cite{Dic26}
recently proved that the exponents $1$ of $d$ and $2$ of
$\varepsilon^{-1}$ in uniform polynomial tractability bounds are individually
optimal, while the exact joint dependence remains open.
The upper bound is probabilistic and was refined through covering and chaining
arguments in \cite{DGS05,Gne08,Ais11,AH14}.  In particular, if
$\boldsymbol{X}_1,\ldots,\boldsymbol{X}_N$ are independent and uniform on
$[0,1)^d$, then
\[
D_N^*(\boldsymbol{X}_1,\ldots,\boldsymbol{X}_N)
\ll \sqrt{d/N}
\]
with probability bounded below by a positive absolute constant.  Its expected
discrepancy has the matching order for $d\le N$~\cite{Doe14}.  The standard
i.i.d.\ sampler, however, uses $dN$ independent coordinates.  After
discretization at the resolution required to preserve the discrepancy order,
its random-bit cost is of order $dN\log(dN)$.  We ask how much of this
independence is actually used by the discrepancy argument.

Writing $\log_+x:=\max\{0,\log x\}$, our main result is that, for every fixed
success probability $u\in(0,1)$, $k$-wise independent uniform points
$\boldsymbol{X}_1,\ldots,\boldsymbol{X}_N$ with
$k=O(d(1+\log_+(N/d)))$ satisfy
\[
D_N^*(\boldsymbol{X}_1,\ldots,\boldsymbol{X}_N)
\ll\sqrt{d/N}
\]
with probability at least $u$.  Thus full independence of the $N$ points is unnecessary even when
$N$ is much larger than $d$.  Theorem~\ref{thm:finite} gives an explicit
constant and the full dependence on $u$.

The proof transfers the chaining argument by matching only the even moments it uses.  An
$r$th moment of a centered Bernoulli sum is unchanged under $r$-wise
independence, so choosing $r$ according to the covering entropy at each level
recovers the independent-point bounds.  This principle is classical in
randomness-efficient sampling and limited-independence concentration
\cite{BR94,SSS95}; see also the sharp bounds in \cite{Sko22}.

The required sample space is realized by the classical random-polynomial
construction over a finite field~\cite{Jof74}.  After lifting the evaluations
to a grid, it gives the same discrepancy order with
\[
O\!\left(d^2\left(1+\log_+\frac Nd\right)\log N\right)
\]
random bits for fixed $u$.  Consequently, at
$N\asymp d\varepsilon^{-2}$ the random-bit cost depends only logarithmically
on $\varepsilon^{-1}$, in contrast to independent grid sampling.

Restricted-randomness Monte Carlo integration was studied in
\cite{HNP04}.  In computational geometry, bounded independence was used for
$\varepsilon$-approximations, $\varepsilon$-nets, and discrepancy-type
problems in \cite{GR97,MRS01}.  These results are formulated for fixed
VC-exponent and do not yield the dimension-uniform estimate above.
G\'omez--G\'omez-P\'erez--Pillichshammer~\cite{GGP21} obtained the order
$\sqrt{d/N}$ from computationally secure pseudorandom generators, rather than
from exact bounded independence.  The polynomial- and rank-one-lattice unions
in \cite{DP26,DD26} have short finite descriptions but currently achieve the
weaker bound $O(d\log N/\sqrt N)$.  To the best of our knowledge, the
combination of $N=O(d\varepsilon^{-2})$ with
$k=O(d(1+\log\varepsilon^{-1}))$-wise independence has not previously been
stated explicitly.

Sections~2 and~3 establish the finite-moment transfer, and Section~4 gives
the finite-field realization and its randomness cost.

\section{Bounded independence and a finite-moment estimate}

\begin{definition}
Random vectors $\boldsymbol{X}_1,\ldots,\boldsymbol{X}_N$ are called
\emph{$k$-wise independent} if
every subfamily of at most $k$ distinct vectors is independent.
\end{definition}

Throughout Section~\ref{sec:finite-general}, every $\boldsymbol{X}_n$ has the uniform
distribution on $[0,1)^d$.  Thus the coordinates inside one point are also
independent and uniform, although the full family of points may be strongly
dependent.  The discrete points used in Section~\ref{sec:finite-field} are
introduced separately there.

For every integer $r\ge2$, put
\[
a_r:=\frac{e(r!)^{1/r}}r.
\]
We use the following finite-moment concentration estimate.  It combines exact
moment matching with the bounded-variable Bernstein and Hoeffding
exponential-moment bounds.

\begin{lemma}[Finite-moment Bernstein and Hoeffding bounds]
\label{lem:finite-moment}
Let $Z_1,\ldots,Z_M$ be $k$-wise independent Bernoulli random variables with
common mean $p$.  Put
\[
S=\sum_{i=1}^M(Z_i-p),
\qquad
V=Mp(1-p).
\]
If $2\le r\le k$ is even, then
\begin{align}
\Prob\left[
 |S|>
 a_r\left(\sqrt{rV}+\frac r3\right)
\right]
&\le e^{-r/2},
\label{eq:moment-tail-bernstein}\\
\Prob\left[
 |S|>
 \frac{a_r}{2}\sqrt{rM}
\right]
&\le e^{-r/2}.
\label{eq:moment-tail-hoeffding}
\end{align}
\end{lemma}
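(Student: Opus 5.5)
Since $r\le k$ is even, $S^r=\sum_{i_1,\ldots,i_r}\prod_{\ell=1}^r(Z_{i_\ell}-p)$ involves at most $r$ distinct indices in each term. Under $k$-wise independence, each such product has the same expectation as under full independence. So $\E[S^r]=\E[\tilde S^r]$, where $\tilde S$ is the same sum built from i.i.d.\ Bernoulli$(p)$ variables. The plan is to bound $\E[\tilde S^r]$ via exponential moments, and then apply Markov's inequality to $S^r$ with threshold $t$ chosen so that $\E[S^r]/t^r\le e^{-r/2}$.

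\textbf{Moment bound from the exponential moment.}
For every $\lambda>0$ and even $r$ we have $x^r\le (r/(e\lambda))^r e^{\lambda|x|}\le (r/(e\lambda))^r(e^{\lambda x}+e^{-\lambda x})$; this follows from maximizing $x^re^{-\lambda x}$ at $x=r/\lambda$. Hence
\[
\E[\tilde S^r]\le \Bigl(\frac{r}{e\lambda}\Bigr)^r\bigl(\E e^{\lambda\tilde S}+\E e^{-\lambda\tilde S}\bigr).
\]
For the Hoeffding case, $\E e^{\pm\lambda\tilde S}\le e^{\lambda^2M/8}$. Optimizing over $\lambda$ gives roughly $\E[\tilde S^r]\le 2(rM/(4e))^{r/2}$. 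The factor $(r!)^{1/r}$ in $a_r$ suggests a cleaner route: expand $\E\cosh(\lambda\tilde S)=\sum_j \lambda^{2j}\E[\tilde S^{2j}]/(2j)!$ and keep only the $j=r/2$ term. This gives
\[
\E[\tilde S^r]\le \frac{r!}{\lambda^r}\,\E\cosh(\lambda\tilde S)\le \frac{r!}{\lambda^r}e^{\lambda^2M/8}.
\]
Choose $\lambda=2\sqrt{r/M}$, so that the exponent is $r/2$. Then $\E[S^r]\le r!\,(M/(4r))^{r/2}e^{r/2}$. Markov's inequality with $t=\frac{a_r}{2}\sqrt{rM}$ gives
\[
\frac{\E S^r}{t^r}\le \frac{r!\,M^{r/2}e^{r/2}/(4r)^{r/2}}{(e^r r!/r^r)\,r^{r/2}M^{r/2}/2^r}=e^{-r/2},
\]
which is exactly \eqref{eq:moment-tail-hoeffding}. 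So the intended proof is the $\cosh$-term extraction, and the constants in $a_r$ line up.

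\textbf{Bernstein case.}
Use the same extraction with Bernstein's mgf bound for centered variables bounded by $1$: $\E e^{\pm\lambda\tilde S}\le \exp\bigl(\frac{\lambda^2V/2}{1-\lambda/3}\bigr)$ for $0<\lambda<3$. This gives
\[
\E[S^r]\le \frac{r!}{\lambda^r}\exp\Bigl(\frac{\lambda^2V}{2(1-\lambda/3)}\Bigr),
\]
and we need to pick $\lambda$ so that the right side is at most $(r!\,e^r/r^r)\,e^{-r/2}(\sqrt{rV}+r/3)^r$. Equivalently, we need a $\lambda$ with
\[
\frac{\lambda^2V}{2(1-\lambda/3)}-r\log\lambda\le \frac r2-r\log r+r\log\bigl(\sqrt{rV}+r/3\bigr).
\]
Try $\lambda=r/(\sqrt{rV}+r/3)$, which lies in $(0,3)$. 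Then $1-\lambda/3=\sqrt{rV}/(\sqrt{rV}+r/3)$, so
\[
\frac{\lambda^2V}{2(1-\lambda/3)}=\frac{r^2V}{2\sqrt{rV}(\sqrt{rV}+r/3)}\le \frac{r^2V}{2rV}=\frac r2.
\]
Also $-r\log\lambda=-r\log r+r\log(\sqrt{rV}+r/3)$. Both terms match exactly, which gives \eqref{eq:moment-tail-bernstein}. The case $V=0$ is trivial, since then $S=0$ almost surely.

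\textbf{Main obstacle.}
The main thing to check is citing the Bernstein mgf inequality in the form with denominator $1-\lambda/3$ for $|Z_i-p|\le1$. The rest is bookkeeping: the expansion of $S^r$ involves at most $r\le k$ distinct indices, and the $\cosh$ extraction is valid because all terms of the series are nonnegative for even moments.
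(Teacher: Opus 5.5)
Your proposal is correct and matches the paper's proof: exact matching of the even moment $\E[S^r]$ for $r\le k$, the extraction $|x|^r\le (r!/t^r)\cosh(tx)$, then the Bernstein and Hoeffding exponential-moment bounds with the same parameter choices. Your $t=r/(\sqrt{rV}+r/3)=(\sqrt{V/r}+1/3)^{-1}$ and $t=2\sqrt{r/M}$ are exactly the paper's, and both are followed by Markov's inequality.
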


\begin{proof}
Since $r$ is even and $r\le k$, the moment
$\E[|S|^r]=\E[S^r]$ agrees with the fully independent case.  We may therefore
assume independence when estimating it.  For every $t>0$ and $x\in\mathbb R$,
\begin{equation}
|x|^r
\le
\frac{r!}{t^r}
\sum_{j=0}^{\infty}\frac{(tx)^{2j}}{(2j)!}
=
\frac{r!}{2t^r}\bigl(e^{tx}+e^{-tx}\bigr).
\label{eq:cosh-moment}
\end{equation}

For the Bernstein estimate, put $X_i=Z_i-p$.  Then
$\E[X_i]=0$, $X_i\le1$, and
\[
\sum_{i=1}^M\E[X_i^2]=V.
\]
The same assertions needed for the upper-tail bound hold for $-X_i$.
Hence the standard Bernstein exponential-moment bound
\cite[Section~2]{BLM13}
gives
\[
\E[e^{\pm tS}]
\le
\exp\left(\frac{Vt^2}{2(1-t/3)}\right),
\qquad 0<t<3.
\]
If $V=0$, the Bernstein claim is trivial.  Otherwise, combining
\eqref{eq:cosh-moment} with this bound and choosing
$t=(\sqrt{V/r}+1/3)^{-1}$ gives
\[
\E[|S|^r]
\le
\frac{r!}{t^r}
\exp\left(\frac{Vt^2}{2(1-t/3)}\right)
\le
\frac{r!}{t^r}e^{r/2}
=
e^{-r/2}
\left(
 a_r\left(\sqrt{rV}+\frac r3\right)
\right)^r.
\]
Therefore Markov's inequality gives
\[
\Prob\left[
 |S|>
 a_r\left(\sqrt{rV}+\frac r3\right)
\right]
= \Prob\left[
 |S|^r>
 \left(a_r\left(\sqrt{rV}+\frac r3\right)\right)^r
\right]
\le e^{-r/2}.
\]

For the Hoeffding estimate, each $X_i=Z_i-p$ lies in the interval
$[-p,1-p]$, which has length one.  Hence Hoeffding's lemma and independence
give
\[
\E[e^{\pm tS}]
=
\prod_{i=1}^M\E[e^{\pm tX_i}]
\le e^{Mt^2/8}.
\]
Combining this with \eqref{eq:cosh-moment} and taking
$t=2\sqrt{r/M}$ gives
\[
\E[|S|^r]
\le
\frac{r!}{t^r}e^{Mt^2/8}
=
e^{-r/2}
\left(
 \frac{a_r}{2}\sqrt{rM}
\right)^r.
\]
Markov's inequality now gives \eqref{eq:moment-tail-hoeffding}.
\end{proof}

\section{Bounded-independence transfer of the chaining argument}
\label{sec:finite-general}

In this section, we follow the dyadic chaining
formulation as in \cite{Suz26}.
For integers $1\le\mu\le H$, put
\begin{equation}
C_m=
\begin{cases}
\min\{\mathcal N_{[]}(d,2^{-m}),\mathcal N(d,2^{-m})\},
&\mu\le m<H,\\
\mathcal N(d,2^{-H}),&m=H.
\end{cases}
\label{eq:chain-cardinality}
\end{equation}
Here $\mathcal N_{[]}(d,\delta)$ and $\mathcal N(d,\delta)$ denote,
respectively, the bracketing and direct covering numbers for anchored boxes.  We use the standard relation
$\mathcal N(d,\delta)\le2\mathcal N_{[]}(d,\delta)$; see, for example,
\cite{Gne08}.

Our starting point is the corresponding estimate for independent Monte Carlo
points.  Theorem~7.2 of \cite{Suz26} states that, for positive weights satisfying
$w_{\mathcal B}+\sum_{h=\mu+1}^{H}w_h<1$, there exists an $N$-point set
$P\subset[0,1)^d$ such that
\begin{align}
ND_N^*(P)
&\le
\sqrt{\frac N2\log\frac{2C_\mu}{w_{\mathcal B}}}
\notag\\
&\quad+
\sum_{h=\mu+1}^{H}
\left(
\sqrt{2N2^{-(h-1)}\log\frac{2C_h}{w_h}}
+\frac13\log\frac{2C_h}{w_h}
\right)
+N2^{-H}.
\label{eq:independent-chain}
\end{align}
Its proof applies Hoeffding's inequality to the coarse boxes and Bernstein's
inequality to the shells, followed by a union bound over the chaining
families.

These concentration inequalities use the factorization of exponential
moments and therefore cannot be applied directly when the points are only
$k$-wise independent.  On the other hand, every even moment of order at most
$k$ of each centered counting sum agrees with the fully independent case.
Lemma~\ref{lem:finite-moment} converts this moment matching into finite-moment
analogues of the Hoeffding and Bernstein bounds.

The standard argument of dyadic chaining produces finite families
$\mathcal B_\mu$ and $\cA_h$, $\mu<h\le H$, such that
\begin{equation}
|\mathcal B_\mu|\le C_\mu,
\qquad
|\cA_h|\le C_h,
\qquad
\lambda(A)\le2^{-(h-1)}
\quad(A\in\cA_h),
\label{eq:chain-properties}
\end{equation}
and, deterministically,
\begin{align}
N D_N^*(\boldsymbol{X}_1,\ldots,\boldsymbol{X}_N)
&\le
\max_{B\in\mathcal B_\mu}
\left|\sum_{n=1}^N
\bigl(\ind_B(\boldsymbol{X}_n)-\lambda(B)\bigr)\right|
\notag\\
&\quad+
\sum_{h=\mu+1}^{H}
\max_{A\in\cA_h}
\left|\sum_{n=1}^N
\bigl(\ind_A(\boldsymbol{X}_n)-\lambda(A)\bigr)\right|
+N2^{-H}.
\label{eq:finite-chain}
\end{align}
see \cite[Proof of Theorem~7.2]{Suz26}.
Thus only the concentration estimate for each fixed member of these families
must be reconsidered.

\begin{theorem}[Bounded-independence transfer]
\label{thm:limited-chaining}
Let $d,N,H\ge1$ and $1\le\mu\le H$ be integers, and let $u\in(0,1)$.
Choose positive weights $w_{\mathcal B}$ and $w_h$, $\mu<h\le H$, such that
\[
w_{\mathcal B}+\sum_{h=\mu+1}^{H}w_h\le1.
\]
Put
\[
L_{\mathcal B}
=
\log\frac{C_\mu}{(1-u)w_{\mathcal B}},
\qquad
L_h
=
\log\frac{C_h}{(1-u)w_h},
\qquad \mu<h\le H,
\]
and choose even integers
\[
r_{\mathcal B}\ge2L_{\mathcal B},
\qquad
r_h\ge2L_h,
\qquad \mu<h\le H.
\]
Finally, put
\[
k
=
\min\left\{
N,\,
\max\bigl(\{r_{\mathcal B}\}\cup\{r_h:\mu<h\le H\}\bigr)
\right\}.
\]
If $\boldsymbol{X}_1,\ldots,\boldsymbol{X}_N$ are $k$-wise independent
uniform points in $[0,1)^d$, then, with probability at least $u$,
\begin{align}
N D_N^*(\boldsymbol{X}_1,\ldots,\boldsymbol{X}_N)
&\le
\frac{a_{r_{\mathcal B}}}{2}\sqrt{r_{\mathcal B}N}
+\sum_{h=\mu+1}^{H}
a_{r_h}
\left(
\sqrt{r_hN2^{-(h-1)}}+\frac{r_h}{3}
\right)
+N2^{-H}.
\label{eq:limited-chain}
\end{align}
\end{theorem}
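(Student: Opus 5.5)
We start from the deterministic chaining inequality \eqref{eq:finite-chain}, which holds for every realization of $\boldsymbol{X}_1,\ldots,\boldsymbol{X}_N$. It is therefore enough to bound each maximum on its right-hand side with a union bound, keeping the total failure probability at most $1-u$.

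For each fixed box $B\in\mathcal B_\mu$ put $Z_n=\ind_B(\boldsymbol{X}_n)$. For each fixed $A\in\cA_h$ put $Z_n=\ind_A(\boldsymbol{X}_n)$. Because each $\boldsymbol{X}_n$ is uniform, these are Bernoulli variables with common mean $p=\lambda(B)$, respectively $p=\lambda(A)$. They are also $k$-wise independent, since they are functions of $k$-wise independent vectors.

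We then apply Lemma~\ref{lem:finite-moment} with $M=N$, taking $r=r_{\mathcal B}$ for the coarse family and $r=r_h$ for level $h$.
\begin{itemize}
\item For $B$ we use the Hoeffding bound \eqref{eq:moment-tail-hoeffding}: the centered sum exceeds $\frac{a_{r_{\mathcal B}}}{2}\sqrt{r_{\mathcal B}N}$ with probability at most $e^{-r_{\mathcal B}/2}$.
\item For $A\in\cA_h$ we use the Bernstein bound \eqref{eq:moment-tail-bernstein}. Since $V=Np(1-p)\le N\lambda(A)\le N2^{-(h-1)}$ by \eqref{eq:chain-properties}, and the threshold is monotone in $V$, the centered sum exceeds $a_{r_h}\bigl(\sqrt{r_hN2^{-(h-1)}}+r_h/3\bigr)$ with probability at most $e^{-r_h/2}$.
\end{itemize}

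The lemma requires $r\le k$, so one point needs care. If $\max r\le N$, then $k=\max r$ and there is nothing to check. Otherwise $k=N$, and the family is fully independent. Full independence means the variables are $r$-wise independent for every $r$, since the definition only restricts subfamilies of at most $k=N$ distinct vectors, and any subfamily has at most $N$ members. So the moment identity in the proof of the lemma still holds for every even $r$. Formally, $N$-wise independence equals full independence, and the lemma's proof only uses that $\E[S^r]$ coincides with the independent value. We state this explicitly, since it is the one place the $\min$ in the definition of $k$ matters.

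Finally, the union bound. Since $r_{\mathcal B}\ge 2L_{\mathcal B}$, we have $e^{-r_{\mathcal B}/2}\le e^{-L_{\mathcal B}}=(1-u)w_{\mathcal B}/C_\mu$. Summing over at most $C_\mu$ boxes gives failure probability at most $(1-u)w_{\mathcal B}$. In the same way, level $h$ contributes at most $C_h e^{-r_h/2}\le(1-u)w_h$. The total failure probability is then at most $(1-u)\bigl(w_{\mathcal B}+\sum_h w_h\bigr)\le 1-u$. On the complementary event, which has probability at least $u$, substituting the bounds into \eqref{eq:finite-chain} gives \eqref{eq:limited-chain}.

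The main obstacle is only the bookkeeping around the case $k=N<\max r$. The rest is direct substitution.
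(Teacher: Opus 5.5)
Your proposal is correct and follows the paper's proof essentially step for step: the finite-moment Hoeffding bound for the coarse boxes, the finite-moment Bernstein bound for the shells using $N\lambda(A)(1-\lambda(A))\le N2^{-(h-1)}$, full independence to cover the case $k=N$, and a union bound with $C_m e^{-r/2}\le(1-u)w$ inserted into \eqref{eq:finite-chain}. Your remark that the Bernstein threshold is monotone in $V$ makes explicit a step the paper leaves implicit.
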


\begin{proof}
For every fixed $B\in\mathcal B_\mu$ or $A\in\cA_h$, the corresponding
indicators of the points are $k$-wise independent Bernoulli variables with
mean $\lambda(B)$ or $\lambda(A)$, respectively.  First suppose that $k<N$.
Every selected moment order is then at most $k$, so
Lemma~\ref{lem:finite-moment} applies.  If $k=N$, the $N$ points are fully
independent and the same moment estimates are available at every even order.
Consequently,
\[
\Prob\left[
\left|\sum_{n=1}^N
\bigl(\ind_B(\boldsymbol{X}_n)-\lambda(B)\bigr)\right|
>
\frac{a_{r_{\mathcal B}}}{2}\sqrt{r_{\mathcal B}N}
\right]
\le e^{-r_{\mathcal B}/2}
\le e^{-L_{\mathcal B}}.
\]
For $A\in\cA_h$, \eqref{eq:chain-properties} gives
\[
N\lambda(A)(1-\lambda(A))\le N2^{-(h-1)},
\]
and hence
\[
\Prob\left[
\left|\sum_{n=1}^N
\bigl(\ind_A(\boldsymbol{X}_n)-\lambda(A)\bigr)\right|
>
a_{r_h}\left(
\sqrt{r_hN2^{-(h-1)}}+\frac{r_h}{3}
\right)
\right]
\le e^{-r_h/2}
\le e^{-L_h}.
\]
Using \eqref{eq:chain-properties}, the union bound gives total failure
probability at most
\begin{align*}
|\mathcal B_\mu|e^{-L_{\mathcal B}}
+\sum_{h=\mu+1}^{H}|\cA_h|e^{-L_h}
&\le
C_\mu e^{-L_{\mathcal B}}
+\sum_{h=\mu+1}^{H}C_he^{-L_h}\\
&=
(1-u)\left(
w_{\mathcal B}+\sum_{h=\mu+1}^{H}w_h
\right)
\le1-u.
\end{align*}
Combining these estimates with \eqref{eq:finite-chain} proves the claim.
\end{proof}

Theorem~2.9 of \cite{Gne24} and the elementary bound
$d^d/d!\le e^d$ imply
$\mathcal N_{[]}(d,\delta)\le e^d\delta^{-d}$; the case $d=1$ is
immediate.  Together with
$\mathcal N(d,\delta)\le2\mathcal N_{[]}(d,\delta)$, this gives
\begin{equation}
\log C_m\le d(1+m\log2)+\log2,
\qquad 1\le m\le H.
\label{eq:cover-entropy}
\end{equation}

\begin{theorem}
\label{thm:finite}
Let $d,N\ge1$ and $u\in(0,1)$, and put
\[
D=d+\log((1-u)^{-1})+2.
\]
Define
\begin{equation}
H=
\max\left\{
12,\,
\left\lceil
\frac12\log_2\frac ND
\right\rceil+2
\right\}
\label{eq:finite-H}
\end{equation}
and
\begin{equation}
k_0
=
\min\left\{
N,
2\left\lceil
(d+1)(1+H\log2)
+\log((1-u)^{-1})
\right\rceil
\right\}.
\label{eq:kfinite}
\end{equation}
If $\boldsymbol{X}_1,\ldots,\boldsymbol{X}_N$ are $k_0$-wise independent
uniform points in $[0,1)^d$, then
\begin{equation}
\Prob\left[
D_N^*(\boldsymbol{X}_1,\ldots,\boldsymbol{X}_N)
\le \frac72\sqrt{\frac DN}
\right]
\ge u.
\label{eq:finite-main}
\end{equation}
\end{theorem}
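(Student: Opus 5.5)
The plan is to specialize Theorem~\ref{thm:limited-chaining} with explicit parameters, then bound each term of \eqref{eq:limited-chain} by a constant multiple of $\sqrt{DN}$. If $\frac72\sqrt{D/N}\ge1$ the claim is trivial, since $D_N^*\le1$. So I may assume $N$ is large compared with $D$, in particular $N>12D$.

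\emph{Parameters.} I would take the coarsest level $\mu=1$ (possibly $\mu=2$ if the constant needs it). The weights are geometric,
\[
w_{\mathcal B}=\tfrac12,\qquad w_h=2^{-(h-\mu)-1}\quad(\mu<h\le H),
\]
so that $w_{\mathcal B}+\sum_h w_h\le1$. By \eqref{eq:cover-entropy},
\[
L_h\le d(1+h\log2)+\log2+\log((1-u)^{-1})+(h-\mu+1)\log2 .
\]
This is at most $(d+1)(1+H\log2)+\log((1-u)^{-1})$ for every $h\le H$, and the same bound holds for $L_{\mathcal B}$. I then set $r_h=2\lceil \ell_h\rceil$, where $\ell_h$ denotes the explicit upper bound for $L_h$, and define $r_{\mathcal B}$ the same way. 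Every $r$ is then even, satisfies $r\ge 2L$, and is at most $2\lceil (d+1)(1+H\log2)+\log((1-u)^{-1})\rceil$. Hence the $k$ of Theorem~\ref{thm:limited-chaining} is at most $k_0$. The $\min\{N,\cdot\}$ has already been handled inside that theorem, so $k_0$-wise independence suffices.

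\emph{Bounding the terms.} First I need a uniform bound $a_r\le c$ for even $r\ge2$, say $c\approx1.2$. Stirling's formula gives $(r!)^{1/r}\le (r/e)(e r)^{1/(2r)}$, so $a_r\le (er)^{1/(2r)}$. This is maximal at small $r$ and can be checked directly. Since $r_h\lesssim 2(d h\log2 + D + h)$ with $d\le D$, each $r_h$ is at most a constant times $D h$. Consequently:
\begin{itemize}
\item the coarse term is $\frac{c}{2}\sqrt{r_{\mathcal B}N}=O(\sqrt{DN})$;
\item the shell terms satisfy $\sum_h c\sqrt{r_hN2^{-(h-1)}}\le c'\sqrt{DN}\sum_h\sqrt{h2^{-h}}$, a convergent series;
\item the linear terms satisfy $\sum_h c\,r_h/3=O(DH^2)$.
\end{itemize}
For the linear terms, the choice \eqref{eq:finite-H} gives $2^{-H}\le\frac14\sqrt{D/N}$ in the second branch. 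In that case $DH^2\le\sqrt{DN}\cdot H^2 2^{-H}\cdot 4$, which is small once $H\ge12$. In the first branch ($H=12$), $N/D\le2^{20}$ and the same comparison holds. The remainder $N2^{-H}$ is at most $\frac14\sqrt{DN}$ in both branches: in the second directly, and in the first because $N2^{-12}\le \sqrt{ND}\,2^{10}/2^{12}$.

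\emph{Main obstacle.} The structure of the argument is routine. The main difficulty is numerical bookkeeping: making the constants in the four pieces add up to at most $\frac72$ for all admissible $d,u,N$. The delicate pieces are the coarse term and the first few shells, where $a_r$ and the $\log 2$ offsets are largest, and the $DH^2$ term for $H$ near $12$. If the crude bounds overshoot, I would first sharpen $a_{r_h}$ using the actual size of $r_h$, which is large since $r_h\ge 2D$ roughly. I would then move $\mu$ up by one or two levels, trading a slightly larger coarse term for removing the worst shells. Finally I would tune the weight profile, for example $w_h\propto (h-\mu)^{-2}$, to reduce the $\log$ overhead in $L_h$.
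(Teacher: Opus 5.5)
Your architecture (trivial case $N\le\frac{49}{4}D$, specialize Theorem~\ref{thm:limited-chaining}, check that every order is at most $k_0$) matches the paper's. However, the quantitative step fails for your choice $\mu=1$ or $2$, and none of your fallbacks can repair it. The obstruction is in the leading-order terms, not in $a_r$ or the $\log2$ offsets.

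You must take $r_h\ge2d(1+h\log2)$, and $a_r>1$ for every $r$ because $r!>(r/e)^r$. So with $\mu=1$ the square-root parts of the first three shells alone give
\[
\sum_{h=2}^{4}a_{r_h}\sqrt{r_hN2^{-(h-1)}}
>\sum_{h=2}^{4}\sqrt{2(1+h\log2)2^{-(h-1)}}\,\sqrt{dN}
>3.75\sqrt{dN}.
\]
The shells $2\le h\le12$, which are always present since $H\ge12$, give more than $6.4\sqrt{dN}$. For fixed $u$, $D/d\to1$ as $d\to\infty$, so \eqref{eq:limited-chain} cannot yield $\frac72\sqrt{DN}$. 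The reason is that for small $h$ the shell volume $2^{-(h-1)}$ is not small while the entropy is already about $dh\log2$, so each such shell costs order $\sqrt{dN}$.

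Moving $\mu$ up one or two levels does not help. Even with $a_r=1$, the limiting coefficient $\sqrt{(1+\mu\log2)/2}+\sqrt2\sum_{h>\mu}\sqrt{(1+h\log2)2^{-(h-1)}}$ is about $4.3$ at $\mu=4$ and $3.7$ at $\mu=5$. The missing idea is to start the chain at a fine level, so that a single Hoeffding term absorbs all coarse scales. The paper takes $\mu=12$, which gives $<\frac{6\sqrt5}{5}\sqrt{ND}$ for the coarse term and $<\frac25\sqrt{ND}$ for the shell square-root parts. That choice also forces every $r\ge20$, which is what makes $a_r<\frac65$ valid. Your uniform bound $a_r\le c\approx1.2$ for all even $r\ge2$ is false ($a_2=e/\sqrt2\approx1.92$, $a_8\approx1.28$). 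The Stirling bound $(r!)^{1/r}\le(r/e)(er)^{1/(2r)}$ you quote is also wrong, since $\sqrt{2\pi}>\sqrt e$.

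Your handling of the linear parts $\sum_h a_{r_h}r_h/3$ also breaks, in two ways. First, the inequality runs the wrong way. The bound $2^{-H}\le\frac14\sqrt{D/N}$ bounds $\sqrt{D/N}$ from below, so it cannot give $DH^2\le4\sqrt{DN}H^22^{-H}$. What is needed is $\sqrt{N/D}>2^{H-3}$, which follows from $\lceil x\rceil-1<x$ only when $H=\lceil\frac12\log_2(N/D)\rceil+2$.

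Second, in the branch $H=12$ no such bound exists. There $N/D$ can be just above $\frac{49}{4}$, where $\frac72\sqrt{ND}\approx\frac{49}{4}D$. With $\mu=1$ the linear parts are at least $\frac23d\sum_{h=2}^{12}(1+h\log2)>40d$, which exceeds $\frac{49}{4}D$ for large $d$ and fixed $u$. The paper avoids this by setting $\mu$ equal to the floor $12$ in \eqref{eq:finite-H}. If $H=12$ there are no shells, and only $N2^{-12}\le\frac14\sqrt{ND}$ remains. If $H\ge13$ the second branch is active, and the linear parts are below $\frac{32}{5}H^22^{-H}\sqrt{ND}<\frac2{15}\sqrt{ND}$.
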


\begin{proof}
If $N\le D$, the conclusion is automatic from
$D_N^*(\boldsymbol{X}_1,\ldots,\boldsymbol{X}_N)\le1$.  Assume otherwise and apply
Theorem~\ref{thm:limited-chaining} with
\[
\mu=12, \qquad
w_{\mathcal B}=\frac12,
\qquad
w_h=2^{11-h} \quad(13 \le h \le H).
\]
These weights are admissible because
\[
w_{\mathcal B}+\sum_{h=13}^{H}w_h
<
\frac12+\sum_{h=13}^{\infty}2^{11-h}
=1.
\]
Choose
\begin{align*}
r_{\mathcal B}
&=
2\left\lceil
d(1+12\log2)+\log\frac4{1-u}
\right\rceil
\le
2D(1+12\log2),\\
r_h
&=
2\left\lceil
d(1+h\log2)
+\log\frac{2^{h-10}}{1-u}
\right\rceil
\le
2D(1+h\log2),
\qquad 13\le h\le H,
\end{align*}
These definitions also give
\[
r_{\mathcal B},r_h\ge 2\lceil1+12\log2\rceil=20.
\]
By \eqref{eq:cover-entropy}, these even orders dominate those required in
Theorem~\ref{thm:limited-chaining}.  Since $h\le H$ and $H\ge12$,
\begin{align*}
d(1+12\log2)+\log\frac4{1-u}
&\le
(d+1)(1+H\log2)+\log((1-u)^{-1}),\\
d(1+h\log2)+\log\frac{2^{h-10}}{1-u}
&\le
(d+1)(1+H\log2)+\log((1-u)^{-1}).
\end{align*}
The monotonicity of the ceiling function therefore shows that all the chosen
orders are at most
\[
2\left\lceil
(d+1)(1+H\log2)
+\log((1-u)^{-1})
\right\rceil.
\]
Hence the required independence order is at most $k_0$.
Theorem~\ref{thm:limited-chaining} now gives \eqref{eq:limited-chain} with
probability at least $u$.

Robbins' form of Stirling's estimate~\cite{Rob55} yields, for every integer
$r\ge2$,
\[
a_r
\le(2\pi r)^{1/(2r)}e^{1/(12r^2)}.
\]
The logarithm of the right-hand side has derivative
\[
\frac{1-\log(2\pi r)}{2r^2}-\frac{1}{6r^3}<0
\qquad (r\ge1).
\]
Therefore, for each moment order used above, evaluation at $r=20$ gives
\[
a_r
\le(40\pi)^{1/40}e^{1/(12\cdot20^2)}
<\frac65.
\]

We now estimate the three terms on the right-hand side of
\eqref{eq:limited-chain}.  For the first term, $\log2<3/4$ gives
\[
\frac{a_{r_{\mathcal B}}}{2}\sqrt{r_{\mathcal B}N}
\le
\frac35\sqrt{2(1+12\log2)}\,\sqrt{ND}
<
\frac{6\sqrt5}{5}\sqrt{ND}.
\]

The second term consists of a square-root part and a linear part.  If $H=12$,
it is empty, so assume henceforth that $H\ge13$.
$\log2<3/4$ and $h\ge13$ give
$1+h\log2\le h$.  The ratio of consecutive terms of
$\sqrt{2h}\,2^{-(h-1)/2}$ is at most
$\sqrt{7/13}<3/4$.  Hence
\[
\begin{aligned}
\sum_{h=13}^H
a_{r_h}\sqrt{r_hN2^{-(h-1)}}
&\le
\frac65\sqrt{ND}
\sum_{h=13}^{\infty}
\sqrt{2h}\,2^{-(h-1)/2}\\
&\le
\frac65\sqrt{ND}
\sum_{h=13}^{\infty}
\sqrt{26}\,2^{-6}\left(\frac34\right)^{h-13}\\
&=
\frac{3\sqrt{26}}{40}\sqrt{ND}
<
\frac25\sqrt{ND}.
\end{aligned}
\]

For the linear part, since $H\ge13$, the definition of $H$ gives
\[
H=\left\lceil\frac12\log_2\frac ND\right\rceil+2.
\]
Since $\lceil x\rceil-1<x$, it follows that
\[
\frac12\log_2\frac ND>H-3,
\qquad
\sqrt{\frac ND}>2^{H-3}.
\]
Using again $a_{r_h}<6/5$ and $r_h\le2D(1+h\log2)$, we obtain
\[
\begin{aligned}
\sum_{h=13}^H\frac{a_{r_h}r_h}{3}
\le
\frac45D\sum_{h=13}^{H}(1+h\log2)
<
\frac{32}{5}\frac{H^2}{2^H}\sqrt{ND}
\le
\frac{169}{1280}\sqrt{ND}
<
\frac2{15}\sqrt{ND}.
\end{aligned}
\]
Here $H^2/2^H$ is decreasing for integers $H\ge3$.  Finally,
\[
N2^{-H}
=
\sqrt{ND}\left(\sqrt{\frac ND}\,2^{-H}\right)
\le\frac14\sqrt{ND}.
\]
Substitution into \eqref{eq:limited-chain} gives
\[
N D_N^*(\boldsymbol{X}_1,\ldots,\boldsymbol{X}_N)
<
\left(
\frac{6\sqrt5}{5}+\frac25+\frac2{15}+\frac14
\right)\sqrt{ND}
<
\frac72\sqrt{ND},
\]
which proves \eqref{eq:finite-main}.
\end{proof}

\begin{corollary}[Randomized construction for the inverse star discrepancy]
\label{cor:inverse}
For all $d\ge1$, $0<\varepsilon\le1$, and $u\in(0,1)$, it is sufficient to take
\[
N=
\left\lceil
\frac{49}{4}
\bigl(d+\log((1-u)^{-1})+2\bigr)\varepsilon^{-2}
\right\rceil
\]
and
\[
k=O\!\left(
 d(1+\log\varepsilon^{-1})+\log((1-u)^{-1})
\right)
\]
to obtain $k$-wise independent uniform points
$\boldsymbol{X}_1,\ldots,\boldsymbol{X}_N$ satisfying
\[
D_N^*(\boldsymbol{X}_1,\ldots,\boldsymbol{X}_N)\le\varepsilon
\]
with probability at least $u$.  For fixed $u$, this gives
$N=O(d\varepsilon^{-2})$ and
$k=O(d(1+\log\varepsilon^{-1}))$.
\end{corollary}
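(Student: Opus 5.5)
The plan is to apply Theorem~\ref{thm:finite} directly with the stated choice of $N$, and then to bound the independence order $k_0$ from \eqref{eq:kfinite} in terms of $d$, $\varepsilon$ and $u$.

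\emph{Discrepancy bound.} Put $D=d+\log((1-u)^{-1})+2$ as in Theorem~\ref{thm:finite}. The choice $N=\lceil \tfrac{49}{4}D\varepsilon^{-2}\rceil$ gives $N\ge \tfrac{49}{4}D\varepsilon^{-2}$, and hence
\[
\frac72\sqrt{\frac DN}\le\frac72\cdot\frac{2\varepsilon}{7}=\varepsilon.
\]
So \eqref{eq:finite-main} shows that $k_0$-wise independent uniform points satisfy $D_N^*\le\varepsilon$ with probability at least $u$. Any $k\ge k_0$ also works, because $k$-wise independence implies $k_0$-wise independence. It therefore suffices to show that $k_0=O(d(1+\log\varepsilon^{-1})+\log((1-u)^{-1}))$.

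\emph{Bounding $k_0$.} From \eqref{eq:kfinite},
\[
k_0\le 2\bigl((d+1)(1+H\log 2)+\log((1-u)^{-1})\bigr)+2.
\]
The task is to bound $H$ from \eqref{eq:finite-H}. Using $\varepsilon\le1$, $D\ge1$ and $N\le \tfrac{49}{4}D\varepsilon^{-2}+1\le \tfrac{53}{4}D\varepsilon^{-2}$, we get
\[
\frac ND\le\frac{53}{4}\varepsilon^{-2},\qquad \frac12\log_2\frac ND\le \log_2\varepsilon^{-1}+\tfrac12\log_2\tfrac{53}{4}.
\]
Consequently $H\le \max\{12,\ \log_2\varepsilon^{-1}+5\}\le 12+\log_2\varepsilon^{-1}$. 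Substituting this gives
\[
(d+1)(1+H\log2)=O\bigl(d(1+\log\varepsilon^{-1})\bigr),
\]
because $d+1\le 2d$. The term $\log((1-u)^{-1})$ appears additively, which yields the claimed form of $k$.

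\emph{Fixed $u$ and the main obstacle.} For fixed $u$, $D=O(d)$, so $N=O(d\varepsilon^{-2})$ and $k=O(d(1+\log\varepsilon^{-1}))$. The only step that needs care is the bound on $H$. Two points must be handled there: the ceiling in $N$, which is where $D\ge1$ and $\varepsilon\le1$ are used, and the fact that the $\log((1-u)^{-1})$ dependence enters only through $D$ in $N/D$. That dependence cancels in the ratio $N/D$, which is why $H$ depends on $\varepsilon$ alone.
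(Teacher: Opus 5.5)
Your proposal is correct and follows the same route as the paper: apply Theorem~\ref{thm:finite} with the displayed $N$, so that $\frac72\sqrt{D/N}\le\varepsilon$; note that $H=O(1+\log\varepsilon^{-1})$ because $N/D=O(\varepsilon^{-2})$; and substitute this into \eqref{eq:kfinite}. You also spell out details the paper leaves implicit, namely the handling of the ceiling in $N$ via $D\varepsilon^{-2}\ge1$ and the cancellation of the $u$-dependence in the ratio $N/D$.
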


\begin{proof}
Apply Theorem~\ref{thm:finite} with the displayed value of $N$.
Then $H=O(1+\log\varepsilon^{-1})$, and substitution into
\eqref{eq:kfinite} gives the asserted independence order.
\end{proof}

\section{Algebraic realization over a finite field}
\label{sec:finite-field}

\begin{definition}[Finite-field polynomial construction]
\label{def:finite-field-construction}
Let $d,N\ge1$ and $1\le k\le N$.  Let $q\ge N$ be a prime power, choose
distinct elements
$\xi_1,\ldots,\xi_N\in\F_q$, and fix a bijection
\[
\iota:\F_q\longrightarrow\{0,\ldots,q-1\}.
\]
Choose $\boldsymbol{A}_0,\ldots,\boldsymbol{A}_{k-1}$ independently and
uniformly from $\F_q^d$, and form the random vector-valued polynomial
\begin{equation}
\boldsymbol{F}(T)
=\boldsymbol{A}_0+\boldsymbol{A}_1T+\cdots
+\boldsymbol{A}_{k-1}T^{k-1}.
\label{eq:random-poly}
\end{equation}
For $1\le n\le N$, set
\[
\boldsymbol{Y}_n=\boldsymbol{F}(\xi_n)\in\F_q^d
\]
and define the grid point
\begin{equation}
\boldsymbol{X}_n=(X_{n,1},\ldots,X_{n,d}),
\qquad
X_{n,j}
=
\frac{\iota(Y_{n,j})}{q},
\qquad 1\le j\le d.
\label{eq:grid-lift}
\end{equation}
The resulting ordered random family
$(\boldsymbol{X}_1,\ldots,\boldsymbol{X}_N)$ is called the finite-field
polynomial construction with independence order $k$.
\end{definition}

\begin{proposition}[Polynomial evaluation sample space]
\label{prop:poly-independence}
In Definition~\ref{def:finite-field-construction}, the random vectors
$\boldsymbol{Y}_1,\ldots,\boldsymbol{Y}_N$ are $k$-wise independent and
uniform on $\F_q^d$.  Consequently,
$\boldsymbol{X}_1,\ldots,\boldsymbol{X}_N$ are $k$-wise independent and
uniform on the grid $\{0,1/q,\ldots,(q-1)/q\}^d$.
\end{proposition}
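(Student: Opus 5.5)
The plan is to reduce everything to the classical fact that evaluation of a polynomial of degree less than $k$ at $k$ distinct points is a bijection onto the values at those points (Lagrange interpolation / Vandermonde invertibility). This is done coordinatewise. Fix distinct indices $n_1,\ldots,n_m$ with $m\le k$. We show that $(\boldsymbol{Y}_{n_1},\ldots,\boldsymbol{Y}_{n_m})$ is uniform on $(\F_q^d)^m$. That single statement gives both uniformity of each $\boldsymbol{Y}_n$ (case $m=1$) and independence of the subfamily, since its joint law is the product of uniform marginals.

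\textbf{Step 1: the coefficient-to-value map.} Write the coefficients as a $k\times d$ matrix $\mathbf A$ over $\F_q$ whose rows are $\boldsymbol{A}_0,\ldots,\boldsymbol{A}_{k-1}$. By construction its $kd$ entries are i.i.d.\ uniform on $\F_q$, so $\mathbf A$ is uniform on $\F_q^{k\times d}$. Extend $\xi_{n_1},\ldots,\xi_{n_m}$ by further distinct field elements to $k$ distinct points. This is possible because $q\ge N\ge k$: if $m<k$, there are at least $k$ elements among $\xi_1,\ldots,\xi_N$ to draw from. Let $W$ be the $k\times k$ Vandermonde matrix with rows $(1,\eta,\ldots,\eta^{k-1})$ at these points. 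Then the matrix of values at the $k$ points is $W\mathbf A$, and its $j$th column lists the values of the $j$th coordinate polynomial.

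\textbf{Step 2: uniformity.} Since the points are distinct, $\det W=\prod_{i<i'}(\eta_{i'}-\eta_i)\neq0$, so $\mathbf A\mapsto W\mathbf A$ is a bijection of $\F_q^{k\times d}$. The image of a uniform matrix under it is therefore uniform. Projecting onto the $m$ rows corresponding to $\xi_{n_1},\ldots,\xi_{n_m}$ preserves uniformity, because a coordinate projection of a uniform distribution on a product set is uniform. This gives the claim for the $\boldsymbol{Y}$'s.

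\textbf{Step 3: transfer to the grid points.} Each $\boldsymbol{X}_n$ is $\phi(\boldsymbol{Y}_n)$, where $\phi:\F_q^d\to\{0,1/q,\ldots,(q-1)/q\}^d$ applies $y\mapsto\iota(y)/q$ in every coordinate. This $\phi$ is a bijection because $\iota$ is. Applying a fixed bijection to each vector preserves both the independence of any subfamily of size at most $k$ and the uniformity of the marginals, so the second assertion follows.

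\textbf{Main obstacle.} There is little real difficulty here. The only points needing care are that $m\le k$ distinct evaluation points can be padded to exactly $k$ distinct points, which uses $q\ge N\ge k$, and that the vector-valued case is handled by acting on all $d$ columns at once. The latter keeps independence across coordinates inside a point as well as across points.
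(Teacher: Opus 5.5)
Your proof is correct and follows essentially the same route as the paper: both use invertibility of a Vandermonde matrix to show that the coefficient-to-value map pushes the uniform law on the coefficients to the uniform law on $(\F_q^d)^m$, and both then transfer the result to the grid through the bijection $\iota$. The only cosmetic difference is that you pad to $k$ distinct points, obtain a bijection, and then project, whereas the paper notes directly that the $m\times k$ evaluation map is surjective because its first $m$ columns form an invertible Vandermonde matrix.
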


\begin{proof}
For distinct $n_1,\ldots,n_\ell$ with $\ell\le k$, the evaluation map
\[
(\boldsymbol{A}_0,\ldots,\boldsymbol{A}_{k-1})
\longmapsto
\bigl(\boldsymbol{F}(\xi_{n_1}),\ldots,
\boldsymbol{F}(\xi_{n_\ell})\bigr)
\]
is surjective.  Indeed, its first $\ell$ coefficient columns form an invertible
Vandermonde matrix.  The image of the uniform coefficient vector is therefore
uniform on $(\F_q^d)^\ell$.  The assertion for the grid points follows from
the bijectivity of $\iota$.
\end{proof}

\begin{theorem}[Star discrepancy of the finite-field construction]
\label{thm:finite-algebraic}
Let $d,N\ge1$ and $u\in(0,1)$, and let $k_0$ be the independence
order defined in Theorem~\ref{thm:finite}.  For every prime power $q\ge N$,
the construction in Definition~\ref{def:finite-field-construction} with
$k=k_0$ generates an $N$-point multiset satisfying
\[
D_N^*(\boldsymbol{X}_1,\ldots,\boldsymbol{X}_N)
\le
\frac92
\sqrt{\frac{d+\log((1-u)^{-1})+2}{N}}
\]
with probability at least $u$.
\end{theorem}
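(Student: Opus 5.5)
Our plan is to reduce to Theorem~\ref{thm:finite} by a coupling between the grid points and genuinely uniform points on $[0,1)^d$, and to pay for the discretization with an additive error of order $d/q\le d/N$. This error will then be absorbed into the constant gap between $\frac72$ and $\frac92$.

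\textbf{Coupling.} Let $\boldsymbol{U}_1,\ldots,\boldsymbol{U}_N$ be independent and uniform on $[0,1/q)^d$, and independent of the coefficients $\boldsymbol{A}_0,\ldots,\boldsymbol{A}_{k_0-1}$. Set $\boldsymbol{X}'_n=\boldsymbol{X}_n+\boldsymbol{U}_n$. By Proposition~\ref{prop:poly-independence}, the $\boldsymbol{X}_n$ are $k_0$-wise independent and uniform on the grid. Adding independent uniform jitter inside each grid cell makes each $\boldsymbol{X}'_n$ uniform on $[0,1)^d$. For any $\ell\le k_0$ distinct indices, the joint law of $(\boldsymbol{X}'_{n_1},\ldots,\boldsymbol{X}'_{n_\ell})$ is a product, because both the grid parts and the jitters are independent across those indices. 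Hence the $\boldsymbol{X}'_n$ are $k_0$-wise independent uniform points, and Theorem~\ref{thm:finite} gives $D_N^*(\boldsymbol{X}'_1,\ldots,\boldsymbol{X}'_N)\le\frac72\sqrt{D/N}$ with probability at least $u$. The event is measurable with respect to the joint randomness, and the comparison below is deterministic, so the probability bound transfers.

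\textbf{Deterministic comparison.} Fix $\boldsymbol{z}\in[0,1]^d$. Since each coordinate of $\boldsymbol{X}_n$ lies in the half-open cell $[i/q,(i+1)/q)$ together with the corresponding coordinate of $\boldsymbol{X}'_n$, we get
\[
\ind_{[\boldsymbol 0,\boldsymbol{z}^-)}(\boldsymbol{X}'_n)\le \ind_{[\boldsymbol 0,\boldsymbol{z})}(\boldsymbol{X}_n)\le \ind_{[\boldsymbol 0,\boldsymbol{z}^+)}(\boldsymbol{X}'_n).
\]
Here $\boldsymbol{z}^-$ rounds each coordinate down to the grid, and $\boldsymbol{z}^+$ rounds up, capped at $1$. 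This holds because $X_{n,j}<z_j$ if and only if $X_{n,j}<\lceil qz_j\rceil/q$, and $X'_{n,j}<\lfloor qz_j\rfloor/q$ implies $X_{n,j}<z_j$. The volumes satisfy $\prod z_j^+-\prod z_j^-\le d/q$ by the telescoping bound for products of numbers in $[0,1]$. Consequently
\[
D_N^*(\boldsymbol{X}_1,\ldots,\boldsymbol{X}_N)\le D_N^*(\boldsymbol{X}'_1,\ldots,\boldsymbol{X}'_N)+\frac dq\le \frac72\sqrt{\frac DN}+\frac dN.
\]

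\textbf{Absorbing the error.} It remains to show $d/N\le\sqrt{D/N}$. If $N\le D$, the claimed bound exceeds $1$ and is trivial. If $N>D$, then $d/N<D/N=\sqrt{D/N}\,\sqrt{D/N}<\sqrt{D/N}$, which yields the constant $\frac72+1=\frac92$.

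The main obstacle is the comparison step: one must check carefully that the half-open conventions make the sandwich exact, including coordinates with $z_j=1$ or $z_j$ on the grid. One must also verify that the jittered family inherits $k_0$-wise independence. If the sandwich turned out awkward, a fallback is to apply the chaining argument directly to grid points, since the covering families only need box volumes. The coupling route is cleaner, however, and uses Theorem~\ref{thm:finite} as a black box.
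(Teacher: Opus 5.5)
Your proof is correct and follows essentially the same route as the paper. You jitter each grid point uniformly within its cell to obtain $k_0$-wise independent uniform points, apply Theorem~\ref{thm:finite} to them, and pay an additive $d/q\le d/N$, which is absorbed into the gap between $\frac72$ and $\frac92$. The paper instead notes that counts in grid boxes $[\boldsymbol{0},\boldsymbol{t}/q)$ coincide and then rounds up with $t_j=\lceil qz_j\rceil$. Your two-sided sandwich amounts to the same thing: its upper inequality is actually an equality, so the lower one is not needed.
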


\begin{proof}
If $N\le d+\log((1-u)^{-1})+2$, the claim follows from
$D_N^*(\boldsymbol{X}_1,\ldots,\boldsymbol{X}_N)\le1$.
Assume henceforth that $N>d+\log((1-u)^{-1})+2$.
Introduce auxiliary random vectors
$\boldsymbol{U}_1,\ldots,\boldsymbol{U}_N$, independent and uniform
on $[0,1)^d$ and independent of the polynomial coefficients, and put
\[
\widetilde{\boldsymbol{X}}_n
=(\widetilde X_{n,1},\ldots,\widetilde X_{n,d}),
\qquad
\widetilde X_{n,j}
=
\frac{\iota(Y_{n,j})+U_{n,j}}q.
\]
By Proposition~\ref{prop:poly-independence}, the points
$\widetilde{\boldsymbol{X}}_1,\ldots,\widetilde{\boldsymbol{X}}_N$ are
$k_0$-wise independent and uniform
on $[0,1)^d$.  Theorem~\ref{thm:finite} therefore applies to them.

For every $\boldsymbol{t}=(t_1,\ldots,t_d)\in\{0,\ldots,q\}^d$, the points
$\boldsymbol{X}_n$ and $\widetilde{\boldsymbol{X}}_n$ lie in the same
$q$-grid cells, and hence
\[
\ind_{[\boldsymbol{0},\boldsymbol{t}/q)}(\boldsymbol{X}_n)
=\ind_{[\boldsymbol{0},\boldsymbol{t}/q)}(\widetilde{\boldsymbol{X}}_n).
\]
For every realization of the jitters, the maximum below is bounded by
$D_N^*(\widetilde{\boldsymbol{X}}_1,\ldots,
\widetilde{\boldsymbol{X}}_N)$.  Hence Theorem~\ref{thm:finite}
and marginalization over the jitters show that, with probability at least
$u$ over the polynomial coefficients,
\begin{equation}
\max_{\boldsymbol{t}\in\{0,\ldots,q\}^d}
\left|
\frac1N\sum_{n=1}^N
\ind_{[\boldsymbol{0},\boldsymbol{t}/q)}(\boldsymbol{X}_n)
-\prod_{j=1}^d\frac{t_j}{q}
\right|
\le\frac72\sqrt{\frac{d+\log((1-u)^{-1})+2}{N}}.
\label{eq:grid-bound}
\end{equation}

The standard grid discretization argument for star discrepancy
\cite[Proposition~3.17]{DP10} gives
\begin{equation}
D_N^*(\boldsymbol{X}_1,\ldots,\boldsymbol{X}_N)
\le
\max_{\boldsymbol{t}\in\{0,\ldots,q\}^d}
\left|
\frac1N\sum_{n=1}^N
\ind_{[\boldsymbol{0},\boldsymbol{t}/q)}(\boldsymbol{X}_n)
-\prod_{j=1}^d\frac{t_j}{q}
\right|
+\frac{d}{q}.
\label{eq:grid-discretization}
\end{equation}
Indeed, for a given $\boldsymbol{z}\in[0,1]^d$, take
$t_j=\lceil qz_j\rceil$.  The corresponding box counts agree, while
$0\le t_j/q-z_j<1/q$; the telescoping product inequality then gives the last
term in \eqref{eq:grid-discretization}.  Finally,
since $q\ge N>d+\log((1-u)^{-1})+2>d$,
\[
\frac{d}{q}
\le
\frac{d}{N}
\le
\sqrt{\frac{d+\log((1-u)^{-1})+2}{N}}
\]
gives the stated estimate.
\end{proof}

\begin{remark}
If one instead places each evaluation at the midpoint
$(\iota(Y_{n,j})+1/2)/q$ of its grid cell, the same proof replaces the
discretization term $d/q$ by $d/(2q)$.  The resulting constant in the theorem
is $4$ rather than $9/2$.
\end{remark}

\begin{corollary}[Random-bit cost]
\label{cor:random-bits}
Fix $u=1/2$ and $N\ge2$, and take $q$ to be the least power of two not
smaller than $N$.  The construction in Definition~\ref{def:finite-field-construction}
with $k=k_0$ uses
exactly $k_0d\log_2q$ random bits, and hence
\[
k_0d\log_2q
=
O\!\left(
d^2\left(1+\log_+\frac Nd\right)\log N
\right).
\]
For $N\asymp d\varepsilon^{-2}$, this becomes
\[
O\!\left(
d^2(1+\log\varepsilon^{-1})
(1+\log d+\log\varepsilon^{-1})
\right).
\]
\end{corollary}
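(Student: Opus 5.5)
The argument is a direct count of the randomness in Definition~\ref{def:finite-field-construction}, followed by substituting the explicit formulas \eqref{eq:finite-H} and \eqref{eq:kfinite} for $H$ and $k_0$. I would proceed in three steps: an exact bit count, an estimate of each factor, and the specialization $N\asymp d\varepsilon^{-2}$.

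\textbf{Exact bit count.} Take $q=2^m$ with $m=\lceil\log_2N\rceil$. Then $\F_q$ has exactly $2^m$ elements, and a uniform element of $\F_q$ is produced by $m$ fair bits via any fixed bijection with $\{0,1\}^m$. The construction draws $k_0$ independent uniform vectors $\boldsymbol{A}_0,\ldots,\boldsymbol{A}_{k_0-1}\in\F_q^d$. This is $k_0d$ field elements, hence exactly $k_0dm=k_0d\log_2q$ random bits. The evaluations $\boldsymbol{F}(\xi_n)$ and the lift \eqref{eq:grid-lift} are deterministic and consume no further bits. Since $k_0\le N\le q$, Definition~\ref{def:finite-field-construction} is applicable.

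\textbf{Bounding the factors.} First, $\log_2q<\log_2N+1\le2\log_2N$ because $N\ge2$, so $\log_2q=O(\log N)$. Second, with $u=1/2$ we have $D=d+\log2+2\asymp d$. From \eqref{eq:finite-H}, $H\le 12+\tfrac12\log_2_+(N/D)+3$, and since $D\ge d$ this gives $H=O(1+\log_+(N/d))$. Substituting into \eqref{eq:kfinite} and dropping the minimum with $N$ yields
\[
k_0\le 2\bigl((d+1)(1+H\log2)+\log2+1\bigr)=O\bigl(d(1+\log_+(N/d))\bigr).
\]
Multiplying the three factors $k_0$, $d$ and $\log_2 q$ gives $O\bigl(d^2(1+\log_+(N/d))\log N\bigr)$.

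\textbf{Specialization.} For $N\asymp d\varepsilon^{-2}$ with $\varepsilon\le1$, we have $N/d\asymp\varepsilon^{-2}$, so $1+\log_+(N/d)=O(1+\log\varepsilon^{-1})$. Also $\log N=O(1+\log d+\log\varepsilon^{-1})$. Substituting these into the previous bound gives the second display.

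\textbf{Main subtlety.} The only point needing care is the hidden constant in $H$. The floor value $12$ in \eqref{eq:finite-H} is absorbed into the ``$1+$'' term. Replacing $\log_2(N/D)$ by $\log_+(N/d)$ uses $D\ge d$ and the monotonicity of $\log_+$. One must also check that the case $N\le D$, where $H=12$, is covered by the same bound; it is, because the estimate $H\le15$ there is $O(1)$. Everything else is routine arithmetic.
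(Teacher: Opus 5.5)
Your proposal is correct and follows essentially the same route as the paper: count the $k_0d$ uniform elements of $\F_q$ at $\log_2 q$ bits each, then bound $\log_2 q$ via $N\le q<2N$ and $k_0$ via \eqref{eq:finite-H} and \eqref{eq:kfinite}. The paper also notes that $k_0\le N$ makes the map from coefficients to outputs injective, so the output space has exactly $q^{k_0d}$ elements. This is what makes ``exactly'' mean that no bits are wasted. Your direct count is sufficient for the statement as written.
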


\begin{proof}
There are $k_0d$ independent uniform coefficients in $\F_q$, each requiring
exactly $\log_2q$ bits.  Since $k_0\le N$, evaluation at the $N$ distinct
field elements determines each polynomial, so the coefficient space and the
space of ordered grid outputs both have cardinality $q^{k_0d}$.  The bounds
follow from $N\le q<2N$, \eqref{eq:finite-H}, and \eqref{eq:kfinite}.
\end{proof}

\section*{Declaration of generative AI use}
The author used ChatGPT 5.6 Sol for literature searches, exploratory
development of ideas, mathematical discussion, and assistance in preparing
portions of the exposition and LaTeX source. All mathematical arguments,
calculations, references, and conclusions were independently checked and
verified by the author, who takes full responsibility for the contents of
the paper.

\end{document}